\newcommand{\labbel}{\label}
\newtheorem{theorem}{Theorem}
\newtheorem{prop}[theorem]{Proposition} 
\newtheorem{proposition}[theorem]{Proposition} 
\newtheorem{corollary}[theorem]{Corollary}
\theoremstyle{definition}
\newtheorem{definition}[theorem]{Definition}
\newtheorem{problems}[theorem]{Problems} 
\theoremstyle{remark}
\newtheorem{remark}[theorem]{Remark}
\newcommand{\brfrt}{\hspace{0 pt}}
\begin{document}
 
\title
{Products of sequentially pseudocompact spaces}

\author{Paolo Lipparini} 
\address{Dipartimento di Matematica\\Viale della Ricerca  Scient\` \i fica\\II Universit\`a Gelmina di Roma (Tor Vergata)\\I-00133 ROME ITALY}
\urladdr{http://www.mat.uniroma2.it/\textasciitilde lipparin}

\thanks{We wish to express our gratitude to X. Caicedo
and S. Garc\'ia-Ferreira for stimulating discussions and correspondence.
We thank our students from Tor Vergata University for (sometimes) stimulating questions}  

\keywords{(sequentially) pseudocompact, feebly compact, topological space, product} 

\subjclass[2010]{Primary 54D20; Secondary 54B10}

\begin{abstract}
We show that the product
of any number of sequentially pseudocompact 
topological spaces is still sequentially pseudocompact.
The definition of sequential pseudocompactness
can be given in (at least) two ways: we show their equivalence.

Some of the results of the present note already appeared in \cite{stseq}.
\end{abstract}

\maketitle

According to \cite[Definition 1.8]{AMPRT}, a topological space $X$ is 
\emph{sequentially pseudocompact} if and only if, for any sequence 
$( O_n) _{n \in \omega } $ 
 of pairwise disjoint nonempty
open sets of $X$, there are an infinite set $J \subseteq \omega $
and a point $x \in X$ such that every
neighborhood of $x$ intersects all but finitely many elements of 
$( O_n) _{n \in J} $.

In \cite{AMPRT} $X$ is assumed to be a Tychonoff space; 
however, here we assume no separation axiom, if not otherwise stated.
Thus, in the present note, a topological space is \emph{sequentially pseudocompact}
if and only if it satisfies the above condition, regardless of the separation
axioms it satisifes.

In this note we present some results about sequential pseudocompactness.
Some of them are known, at least for one of the possible definitions
of sequential pseudocompactness: see \cite{stseq}.

In fact, in \cite{CKS, stseq} a slightly different definition of sequential pseudocompactness
is given, not requesting  the sets $( O_n) _{n \in \omega } $ to be pairwise disjoint.
This notion is called \emph{sequential feeble compactness} in \cite{stseq}.
In the next proposition we show  that the two definitions  are actually equivalent.
 This turns out to be somewhat useful.

\begin{proposition} \labbel{equiv} 
For every topological space $X$, the following conditions are equivalent.
  \begin{enumerate}  
  \item $X$ is sequentially pseudocompact.
\item For any sequence 
$( O_n) _{n \in \omega } $ 
 of  nonempty
open sets of $X$, there are an infinite set $J \subseteq \omega $
and a point $x \in X$ such that, for every
neighborhood $U$ of $x$,
the set $\{ n \in J \mid U \cap O_n = \emptyset \}$ is finite.
  \end{enumerate}
\end{proposition}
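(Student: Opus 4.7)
The implication (2) $\Rightarrow$ (1) is immediate, because pairwise disjoint sequences of nonempty open sets form a special case of arbitrary sequences. The substance lies in (1) $\Rightarrow$ (2), for which my plan is either to extract from an arbitrary $(O_n)_{n \in \omega}$ a family of pairwise disjoint nonempty open sets $W_k \subseteq O_{n_k}$ (so that (1) applies directly, since every neighborhood of the resulting cluster point meets $W_k \subseteq O_{n_k}$ cofinitely often) or to locate the witness for (2) through a nested sequence of open sets.

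I would set up a recursion maintaining, at each stage $l \ge 0$, a nonempty open $V^{(l)}$, an infinite $N^{(l)} \subseteq \omega$ with $O_n \cap V^{(l)} \neq \emptyset$ for all $n \in N^{(l)}$, and distinct indices $p_1, \ldots, p_l$ with $V^{(l)} \subseteq O_{p_k}$ for $k \le l$; start from $V^{(0)} = X$, $N^{(0)} = \omega$. At each step, if $V^{(l)} \subseteq O_n$ for cofinitely many $n \in N^{(l)}$, then any $x \in V^{(l)}$ witnesses (2) directly, since every neighborhood $U$ of $x$ satisfies $U \cap O_n \supseteq U \cap V^{(l)} \ni x$. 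Otherwise, I would greedily build a pairwise disjoint family $W_k \subseteq (O_{n_k} \cap V^{(l)}) \setminus \overline{W_0 \cup \ldots \cup W_{k-1}}$ with $n_k \in N^{(l)}$, omitting the finitely many $n$ for which $V^{(l)} \subseteq O_n$. If this produces an infinite family, (1) applied to it yields (2) as noted. Otherwise the greedy construction stalls, meaning cofinitely many $n \in N^{(l)}$ satisfy $O_n \cap V^{(l)} \subseteq \bigcup_i \overline{W_i}$; since $O_n \cap V^{(l)}$ is open, it must meet some $W_i$, and pigeonhole produces an index $i^*$ such that $O_n \cap W_{i^*} \neq \emptyset$ for infinitely many $n$. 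Setting $V^{(l+1)} = W_{i^*}$ and $p_{l+1} = n_{i^*}$ continues the recursion, with $V^{(l+1)} \subsetneq V^{(l)}$ strictly (because $V^{(l)} \not\subseteq O_{n_{i^*}}$) and $p_{l+1}$ distinct from $p_1, \ldots, p_l$.

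If the recursion never terminates in a success, the outcome is a strictly decreasing sequence of nonempty open sets $V^{(l)}$ with $V^{(l)} \subseteq O_{p_k}$ for every $k \le l$. To close this case I would invoke feeble compactness — which itself follows from (1) by applying it to the pairwise disjoint differences $U_n \setminus \overline{U_{n+1}}$ of a decreasing sequence $(U_n)$ of nonempty open sets, separately handling the easy case in which these differences are eventually empty — to obtain some $x \in \bigcap_l \overline{V^{(l)}} \subseteq \bigcap_k \overline{O_{p_k}}$. Then every neighborhood of $x$ meets $O_{p_k}$ for every $k$, so (2) holds with $J = \{p_k : k \in \omega\}$. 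The hardest part is precisely this terminal step of the recursion: one needs the auxiliary observation that (1) yields feeble compactness, together with careful bookkeeping to guarantee that the $V^{(l)}$ strictly decrease, that the $p_l$ remain distinct, and that the invariant $V^{(l)} \subseteq O_{p_k}$ is preserved at every stage.
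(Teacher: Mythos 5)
Your argument is correct, but it takes a genuinely different route from the paper's. The paper proves (1) $\Rightarrow$ (2) by contradiction: assuming (2) fails, every infinite $J\subseteq\omega$ and every $x$ admit an open neighborhood $U(J,x)$ missing $O_n$ for infinitely many $n\in J$, and these witnessing neighborhoods drive a single simultaneous induction producing indices $m_i$, infinite sets $J_i$ decreasing under inclusion, and nonempty open $U_i\subseteq O_{m_i}$ with $U_i\cap O_n=\emptyset$ for $n\in J_i$; pairwise disjointness then comes for free (since $m_i\in J_k$ for $k<i$), and one application of (1) to $(U_i)$ contradicts the assumed failure. You instead argue directly: you disjointify greedily via closures ($W_k$ inside $O_{n_k}\cap V^{(l)}$ and off $\overline{W_0\cup\dots\cup W_{k-1}}$), apply (1) to the $W_k$'s when the greedy succeeds, and when it stalls you pass by pigeonhole to a single $W_{i^*}$ and iterate, so a never-terminating run yields a nested sequence $V^{(l)}$ with $V^{(l)}\subseteq O_{p_k}$ for $k\le l$ and distinct $p_k$, which you close off with the auxiliary fact that (1) gives a cluster point of any decreasing sequence of nonempty open sets (your sketch via the pairwise disjoint differences $U_n\setminus\overline{U_{n+1}}$, with the easy eventually-empty case, is fine, and only this nested form of feeble compactness is needed). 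The paper's proof is shorter and self-contained; yours avoids the contradiction, and the byproduct that the disjoint formulation (1) already implies this nested feeble-compactness property has some independent interest. Two points you should make explicit: in the branch where your first case fails, the set of $n\in N^{(l)}$ with $V^{(l)}\subseteq O_n$ need not be finite (only non-cofinite in $N^{(l)}$), though omitting it still leaves an infinite set on which to run the greedy; and the greedy must use pairwise distinct indices $n_k$ (one $W$ per index $n$), since otherwise $J=\{n_k\mid k\in I\}$ could be finite in the successful-greedy case, and condition (2) requires an infinite $J$.
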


  \begin{proof}
(2) $\Rightarrow $  (1) is trivial.

For the converse, suppose that $X$ is sequentially pseudocompact,
and let $( O_n) _{n \in \omega } $ 
be a sequence of  nonempty
open sets of $X$. 
Suppose by contradiction that

(*) for every infinite set $J \subseteq \omega $
and every point $x \in X$ there is some
neighborhood $U (J,x) $ of $x$ such that 
$N(J,x)=\{n \in J \mid  U(J,x) \cap O_n = \emptyset \}$ 
is infinite. Without loss of generality, we can assume
that $U (J,x)$ is open.

We shall construct by symultaneous induction a sequence
$(m_i) _{ i \in \omega } $ of distinct natural numbers, 
a 
 sequence 
$(J_i) _{i \in \omega } $ of infinite subsets of $ \omega$, 
and a sequence  
of pairwise disjoint nonempty open sets 
$(U_i) _{i \in \omega } $ such that 
  \begin{enumerate} 
   \item[(a)]
$ U _{i} \subseteq O _{m_i} $ for every $i \in \omega $, 
\item[(b)]
$ U _{i} \cap O _{n} = \emptyset  $, for every $i \in \omega $, and  $n \in J_i$, and 
\item[(c)]
$J_i \supseteq J_h$, whenever $i \leq h \in \omega $.  
 \end{enumerate}

Put $m_0=0$ and
pick $x_0 \in O_0$ (this is possible, since 
$O_0$ in nonempty).
Apply (*) with $J= \omega $ and $x=x_0$,
and let $U_0= U( \omega, x_0) \cap O_0 \subseteq O_0 = O _{m_0} $
and $J_0 =N( \omega ,x_0)$.
$U_0 $ is nonempty, since $x_0 \in U( \omega, x_0)\cap O_0$.
By (*), $J_0$ is infinite, and 
Clause (b) is satisfied for $i=0$. 
Hence the basis of the induction is completed.

Suppose now that $ 0 \not=i \in \omega$, and that  we have constructed 
finite sequences
$(m_k) _{ k < i} $, 
$(J_k) _{k< i} $, and
$(U_k) _{k < i } $ satisfying the desired properties.
Let $m_i$ be any element of $ J _{i-1} $.
Since $ J _{i-1} $ is infinite, 
we can choose $m_i$ distinct from all the  
$m_k$'s, for $k < i$ (however, this would follow automatically 
from (a) and (b)).
Let $x_i$ be any element of the nonempty  $O _{m_i}$.  
Apply (*) with $J= J _{i-1}$ and $x=x_i$,
and let $U_i= U (J _{i-1},x_i) \cap O _{m_i}  \subseteq  O _{m_i} $
and $J_i =N(J _{i-1},x_i)$.
As above,
$U_i$ is nonempty, since $x_i \in U (J _{i-1},x_i) \cap  O _{m_i}$.
By the definition of 
$N(J _{i-1},x_i)$, we have that
$J_i \subseteq J _{i-1} $, hence Clause (c) holds, by
the inductive hypothesis.  
By (*), $J_i$ is infinite, and moreover
Clause (b) is satisfied for $i$. 
It remains to show that
$U_i$ is disjoint from $U_k$, for $k < i$.   
Since, by construction, $m_i \in  J _{i-1}  $,
then, by (c) of the inductive hypothesis,
for every $k < i$, we have that $m_i \in J_k$, hence, by (b),
$U_k \cap O _{m_i} = \emptyset   $,
hence also
$U_k \cap U_i = \emptyset   $,
since by construction
$U_i \subseteq  O _{m_i} $. 
The induction step is thus complete.

Having constructed sequences satisfying the above properties,
 we can apply sequential pseudocompactness to the sequence 
$(U_i) _{i \in \omega } $ of nonempty pairwise disjoint
open sets,
getting some $J \subseteq \omega $
and some $x \in X$ such that every
neighborhood of $x$ intersects all but finitely many elements of 
$( U_i) _{i \in J} $.

If we put $J^* = \{ m_i \mid i \in J \} $,
then   
every
neighborhood of $x$ intersects all but finitely many elements of 
$( O_n) _{n \in J^*} $, because of Clause (a). We have reached a contradiction, 
thus the proposition is proved.
 \end{proof}

We now state the main result of the present note.
In case sequential pseudocompactness is defined according to
Condition (2) in Proposition \ref{equiv}, it is Theorem 4.1 in \cite{stseq}. 
If not otherwise specified, a product of topological spaces is 
always endowed with the Tychonoff product topology
(the coarsest topology which makes the projections continuous).

\begin{theorem} \labbel{infprod} 
A product of topological spaces is sequentially pseudocompact  if and only if each factor is sequentially pseudocompact.
\end{theorem}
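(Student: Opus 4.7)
The easy direction, that each factor of a sequentially pseudocompact product is sequentially pseudocompact, will follow by projection: given a sequence $(O_n)$ of pairwise disjoint nonempty open sets in a factor $X_i$, the preimages $\pi_i^{-1}(O_n)$ form such a sequence in $\prod_{i \in I} X_i$, and any witness $(x, J)$ of sequential pseudocompactness of the product yields the witness $(\pi_i(x), J)$ in $X_i$, since the preimage of any neighborhood of $\pi_i(x)$ is a neighborhood of $x$.

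For the nontrivial direction I plan to verify the equivalent Condition (2) of Proposition \ref{equiv}, whose advantage is that the given open sets need not be pairwise disjoint. Let $(O_n)_{n \in \omega}$ be a sequence of nonempty open sets in $X = \prod_{i \in I} X_i$. After shrinking each $O_n$ to a nonempty basic open subset, I may assume $O_n = \prod_{i \in I} V_{n,i}$ with $V_{n,i} = X_i$ outside a finite set $F_n$. The key reduction is that $I_0 := \bigcup_n F_n$ is countable, and whether a basic neighborhood $\prod_i U_i$ of a point intersects $O_n$ depends only on the coordinates $i$ in the intersection of its support with $I_0$ (the others contribute the trivial condition $U_i \cap X_i \neq \emptyset$). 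Thus it suffices to produce a witness $x' \in \prod_{i \in I_0} X_i$ for the countable subproduct and then extend $x'$ arbitrarily to $x \in X$.

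For the countable case, enumerate $I_0 = \{i_0, i_1, \dots\}$ and build recursively. At step $k$, apply Condition (2) of Proposition \ref{equiv} in the factor $X_{i_k}$ to the sequence $(V_{n, i_k})_{n \in J^{k-1}}$ of nonempty open sets (with $J^{-1} := \omega$), obtaining a point $x_{i_k} \in X_{i_k}$ and an infinite $J^k \subseteq J^{k-1}$ accumulating that sequence at $x_{i_k}$. A standard diagonal step then selects $n_0 < n_1 < \cdots$ with $n_k \in J^k$, giving an infinite $J = \{n_k : k \in \omega\}$ almost contained in every $J^k$. Setting $x := (x_{i_k})_k$ (extended as described), Condition (2) for $(O_n)$ at $x$ along $J$ follows because any basic neighborhood of $x$ has only finitely many nontrivial coordinates, and each such coordinate $i_k$ contributes only finitely many indices $n \in J$ at which the relevant factor is disjoint from $V_{n, i_k}$, by the construction of $J^k$.

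The main conceptual obstacle is that the coordinatewise sequences $(V_{n, i_k})_n$ are very far from being pairwise disjoint — in fact most of their entries simply equal $X_{i_k}$ — so the original definition of sequential pseudocompactness cannot be applied to them directly. This is exactly the role played by Proposition \ref{equiv}: it releases the disjointness requirement and lets the recursion go through cleanly. Attempting to first pass to a disjoint subfamily at each coordinate while simultaneously thinning across infinitely many coordinates would be considerably more delicate, since disjointness at coordinate $i_k$ need not persist after later restriction of the index set.
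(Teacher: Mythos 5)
Your proposal is correct and follows essentially the same route as the paper: the trivial direction by projection, the reduction to countable subproducts via the countable union of supports of basic open sets, and then the recursive coordinate-by-coordinate application of Condition (2) of Proposition \ref{equiv} followed by a diagonal choice of $J$, exactly as in the paper's proof. Your closing remark about why Condition (2) (rather than the disjoint-sets definition) is the right tool also matches the paper's stated motivation for Proposition \ref{equiv}.
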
  

\begin{proof}
The only-if part is trivial, since any continuous image of 
a sequentially pseudocompact space is sequentially pseudocompact.

In order to prove the converse, we shall use the equivalent formulation
given by Condition (2) in Proposition \ref{equiv}. 
We first show:

\smallskip

(A) \emph{A product of topological spaces is sequentially pseudocompact  if and only if every subproduct of $ \leq \omega $  factors is sequentially pseudocompact.} 

\smallskip
 
The proof 
is similar to the classical argument showing that a topological space
is pseudocompact if and only if every subproduct of $ \leq \omega $  factors is  pseudocompact.

Again, one implication is trivial. To prove the converse, assume 
that $X = \prod _{h \in H} X_h$,
and that  $\prod _{h \in C} X_h$
is sequentially pseudocompact, for every 
$C \subseteq H$ such that $|C| \leq \omega $. 

Let 
$( O_n) _{n \in \omega } $ 
be a sequence of  nonempty
open sets of $X$. 
For every $ n \in \omega$ 
there is a nonempty open set 
$O'_n \subseteq O_n$ 
such that $O'_n$ has the form 
$\prod _{h \in H} O' _{n,h} $, where each
$O' _{n,h} $ is a (nonempty) open set of $X_h$,
and $O' _{n,h} = X_h$, for all but finitely many $h$'s. 
The set 
$K=\{ h \in H \mid O' _{n,h} \not= X_h, \text{ for some } n \in \omega  \}$
 is countable, being a countable union of finite sets.
For every $ n \in \omega$,
the set  $\prod _{h \in K} O' _{n,h} $
is an open set of 
$\prod _{h \in K} X_h$.
By assumption, 
$\prod _{h \in K} X_h$ is sequentially pseudocompact, hence,
by Proposition \ref{equiv}(2), 
  there are 
$J \subseteq \omega $ and  
$ (x_h) _{h \in K} \in \prod _{h \in K} X_h$
such that, for every neighborhood $U$ of 
$ (x_h) _{h \in K} $ in $ \prod _{h \in K} X_h$,
the set $\{ n \in J  \mid U \cap \prod _{h \in K} O' _{n,h} = \emptyset \}$ 
is finite.
Extend the sequence $(x_h) _{h \in K}$ to a sequence 
$(x_h) _{h \in H}$ by picking  some arbitrary 
 $x_h$, for each $h \in H \setminus K$.  
Then,
for every neighborhood $U'$ of $(x_h) _{h \in H}$, 
the set $\{ n \in J  \mid U' \cap \prod _{h \in H} O' _{n,h} = \emptyset \}$ 
is finite, 
since the image $U$ of $U'$ under the natural projection onto
 $ \prod _{h \in K} X_h$ is a neighborhood of 
$(x_h) _{h \in K}$, and, for every $ n \in \omega$, 
$U' \cap \prod _{h \in H} O' _{n,h} = \emptyset $
if and only if  
$U \cap \prod _{h \in K} O' _{n,h} = \emptyset $,
since $O' _{n,h} = X_h$, for every $ n \in \omega$ 
and every $h \in H \setminus K$.
According to Proposition \ref{equiv}(2), 
$ \prod _{h \in H} X_h$ is  sequentially pseudocompact, 
as witnessed by 
$(x_h) _{h \in H}$ and $J$,
and  (A) is proved.

\smallskip

In view of (A), in order to prove the theorem, it is enough to prove that
a product of $ \leq \omega$ sequentially pseudocompact spaces is
sequentially pseudocompact. This is  similar to the proof that
a countable product of sequentially compact spaces is sequentially compact.

Suppose that $X = \prod _{i \in \omega } X_i$
is a product of sequentially pseudocompact spaces,
and that
$( O_n) _{n \in \omega } $ 
is a sequence of  nonempty
open sets of $X$. 
For every $ n \in \omega$ 
there is a nonempty open set 
$O'_n \subseteq O_n$ 
such that $O'_n$ has the form 
$\prod _{i \in \omega } O' _{n,i} $, where each
$O' _{n,i} $ is a (nonempty) open set of $X_i$.

We shall construct a sequence of elements
$ (x _{i} ) _{i \in \omega } $, where each
$ x _{i} $ belongs to $ X _{i} $,
and a sequence $ (J_{i}) _{i \in \omega }  $ of infinite subsets of $ \omega$ 
such that $ J_{i}  \supseteq  J_{j}$ for $i \leq j < \omega $,
and such that, for every $ i \in \omega$,
and every neighborhood $U_i$ of $x_i$ in   
$ X _{i} $, the set 
$\{ n \in J_i  \mid U_i \cap O' _{n, i} = \emptyset \}$ 
is finite.

Since $ X _{0} $
is sequentially pseudocompact, and the 
$O' _{n, 0}$'s are nonempty, 
by Proposition \ref{equiv}(2) 
there
are an infinite set $J_0 \subseteq \omega $
and some
$ x_0 \in X _{0} $
such that, for every neighborhood $U_0$ of $x_0$, 
the set $\{ n \in J_0  \mid U_0 \cap O' _{n, 0} = \emptyset \}$ 
is finite.

Going on, suppose that we have constructed 
$x_i$ and $J_i$. 
By applying the 
sequential pseudocompactness of  $ X _{i+1} $ 
to the (infinite) sequence 
$(O' _{n, i+1}) _{n \in J_i} $, 
we get
 an infinite set $J _{i+1}  \subseteq J_i $
and 
$ x _{i+1} \in X _{i+1 } $
such that, for every neighborhood $U_{i+1} $ of $ x _{i+1}$, 
 the set $\{ n \in J _{i+1}  \mid U_{i+1} \cap O' _{n, i+1}= \emptyset \}$ 
is finite. The desired sequences are thus constructed.

Pick some $n_0 \in J_0$, and,  for each $ i \in \omega$,
 pick some $n_i \in J_i \setminus \{ n_0, \dots, n _{i-1}  \} $,
and let $J= \{ n_i \mid i \in \omega \} $.
Notice that, for every $i \in \omega$,
$J_i \setminus J$ is finite, since 
 the sequence $(J_i) _{i \in \omega } $ is decreasing
with respect to inclusion. 

Let $U$ be any neighborhood of 
$(x_i) _{ i \in \omega } $ in $\prod _{ i \in \omega } X_i$.
Thus, $U$ contains some product 
$U'=\prod _{i \in \omega } U_i$,
where 
 $F=\{ i \in \omega  \mid U_i \not= X_i\}$ is finite, and 
each $U_i$ is a neighborhood of $x_i$ in $X_i$.
If $i \in \omega $ and $i \not \in F$, then  $U_i = X_i$,
hence $U_i \cap O' _{n, i} \not= \emptyset   $, for every 
$ n \in \omega$, since each 
$O' _{n, i}$ is nonempty.
If $i \in F$, then,
since $U_i$ is a neighborhood of $x_i$,
we get by the above construction that 
the set $\{ n \in J_i  \mid U_i\cap O' _{n, i} = \emptyset \}$ 
is finite.
Since all members of $J$ belong to $J_i$,
except possibly for a finite number of elements, 
then also  
$N_i (U')= \{ n \in J  \mid U_i \cap O' _{n, i} = \emptyset \}$ 
is finite.
Let $N (U')$ be the finite set $  \bigcup _{i \in F} N_i(U') $. 
We have proved that if 
$i \in \omega  $
and  $n \in J \setminus N(U')$, 
then 
$U_i \cap O' _{n, i} \not = \emptyset $,
hence,
if $n \in J \setminus N(U')$, then
$U \cap O_n
\supseteq 
\left(\prod _{i \in \omega } U_i  \right) \cap 
\left(\prod_{i \in \omega } O' _{n, i}\right) 
= 
\prod _{i \in \omega } (U_i  \cap  O' _{n, i}) 
\not = \emptyset$.

In conclusion, we have showed that, for every 
sequence  $( O_n) _{n \in \omega } $ 
 of  nonempty
open sets of $X$,
there are $J \subseteq \omega $ 
and an element 
$ (x_i) _{i \in \omega } \in X=\prod _{i \in \omega } X_i$
such that, for every neighborhood $U$ of 
$ (x_i) _{i \in \omega }$, the set 
$\{ n \in J \mid U \cap O_n = \emptyset \} $ is finite
(being a subset of  $N(U'))$.
This is equivalent to sequential pseudocompactness of
$X$, according to Condition (2)
in Proposition \ref{equiv}. 
\end{proof}

Both $\beta \omega $ and 
$D^{ \mathfrak c}$  
are classical examples of 
compact non sequentially compact spaces.
As noticed on \cite[p. 7]{AMPRT},
$\beta \omega $ is not sequentially pseudocompact.
On the other hand, $D^{ \mathfrak c}$
is sequentially pseudocompact, by Theorem \ref{infprod}. 
Thus, compactness together with sequential pseudocompactness 
do not necessarily imply sequential compactness.
In particular, normality and sequential pseudocompactness 
do not imply sequential compactness (thus the result that normality and pseudocompactness 
imply countable compactness cannot be generalized
in the obvious way).
Also, a compact subspace of a compact
sequentially pseudocompact space is not necessarily
sequentially pseudocompact,
since $\beta \omega $ can be embedded in
$D^{ \mathfrak c}$.
In particular, a closed subspace of a sequentially pseudocompact 
space is not necessarily sequentially pseudocompact.

\begin{remark} \labbel{rmk} 
A remark   on terminology is now needed, since we are assuming no separation
axiom, while usually pseudocompactness is considered
in conjunction with the Tychonoff separation axiom.
 There are many conditions which are equivalent to pseudocompactness
in the class of Tychonoff spaces,
 but which are in general distinct, 
for spaces satisfying weaker separation axioms. See, e.~g., \cite{S}. 
A topological space is \emph{feebly compact} if and only if, for any sequence 
$( O_n) _{n \in \omega } $ 
 of nonempty
open sets of $X$, there is a point $x \in X$ such that 
$\{ n \in  \omega  \mid U \cap O_n \not= \emptyset \} $ is infinite,
for every neighborhood $U$ of $x$.
Feeble compactness is also equivalent to 
a notion called \emph{weak initial $ \omega$-compactness}.
See, e.~g., \cite[Remark 3]{L}, and further references there.
For Tychonoff  spaces, feeble compactness is well-known to be equivalent to pseudocompactness.
\end{remark}

\begin{corollary} \labbel{p} 
Let $\mathcal P$ be any property of topological spaces
such that every feebly compact (resp., Tychonoff pseudocompact) topological space satisfying $\mathcal P$ 
is sequentially pseudocompact. 

Then the product of any family of feebly compact (resp., Tychonoff pseudocompact) spaces all of which satisfy
$\mathcal P$ is feebly compact (resp., pseudocompact).
\end{corollary}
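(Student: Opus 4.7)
The plan is to chain together three observations: the hypothesis on $\mathcal P$, the product theorem just proved, and the fact that sequential pseudocompactness trivially implies feeble compactness.

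First I would note that, by the hypothesis on $\mathcal P$, each factor $X_h$ in a family of feebly compact (resp.\ Tychonoff pseudocompact) spaces satisfying $\mathcal P$ is automatically sequentially pseudocompact. Then by Theorem \ref{infprod}, the product $X = \prod_{h \in H} X_h$ is sequentially pseudocompact. So it suffices to observe that any sequentially pseudocompact space is feebly compact: given a sequence $(O_n)_{n \in \omega}$ of nonempty open sets, apply Proposition \ref{equiv}(2) to obtain an infinite $J \subseteq \omega$ and a point $x \in X$ such that, for every neighborhood $U$ of $x$, the set $\{n \in J \mid U \cap O_n = \emptyset\}$ is finite; then $\{n \in \omega \mid U \cap O_n \not= \emptyset\}$ contains the cofinite subset of $J$ on which $U$ meets $O_n$, which is infinite. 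This is exactly the definition of feeble compactness recalled in Remark \ref{rmk}.

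For the Tychonoff pseudocompact version, I would further use that an arbitrary product of Tychonoff spaces is Tychonoff, so $X$ is a Tychonoff space which is feebly compact (by the argument above). By the equivalence between feeble compactness and pseudocompactness for Tychonoff spaces, recalled at the end of Remark \ref{rmk}, $X$ is pseudocompact.

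There is no real obstacle here: the entire content lies in Theorem \ref{infprod} (via Proposition \ref{equiv}) together with the definitions. The only small point to verify carefully is the implication from sequential pseudocompactness to feeble compactness, and this follows immediately from form (2) of the definition, which is why invoking Proposition \ref{equiv} is convenient in the argument.
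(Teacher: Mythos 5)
Your proposal is correct and follows exactly the paper's route: hypothesis on $\mathcal P$ plus Theorem \ref{infprod} give sequential pseudocompactness of the product, which (conveniently via Proposition \ref{equiv}(2)) implies feeble compactness, and hence pseudocompactness in the Tychonoff case since Tychonoff is productive. You have merely spelled out the steps the paper's one-line proof calls trivial.
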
 

\begin{proof}
Immediate from Theorem \ref{infprod}, since sequential pseudocompactness 
trivially implies feeble compactness,
and hence,  in 
the (productive) class of Tychonoff spaces,
 also pseudocompactness.
\end{proof}

In \cite{AMPRT} many  properties 
$\mathcal P$ are shown to satisfy the assumption in 
Corollary \ref{p}, among them, the properties of being 
a topological group, of being Tychonoff and scattered, or first countable, 
 or $ \psi$-$\omega$-scattered.

In particular, from Theorem \ref{infprod} and \cite[Proposition 1.10]{AMPRT}, 
we get another proof of the classical result by 
Comfort and Ross that any product
of pseudocompact topological groups is pseudocompact.
 However, it is not clear
whether this is a real simplification: it might be the case that any proof that every pseudocompact topological
group is sequentially pseudocompact 
already contains enough sophistication to be easily converted into 
a direct proof of Comfort and Ross Theorem.

\begin{corollary} \labbel{prodp}
If $X = \prod _{h \in H} X_h$  is a product of 
Tychonoff  pseudocompact 
topological spaces and, for each $h \in H$,
$X_h$ is either scattered, or first countable, or, more generally, $ \phi$-$\omega$-scattered,
or allows the structure of a topological group, then 
$X$ is pseudocompact (actually, sequentially pseudocompact). 
 \end{corollary}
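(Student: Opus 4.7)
The plan is to repackage Theorem \ref{infprod} together with the cited results from \cite{AMPRT}: first show that each factor $X_h$ is already sequentially pseudocompact, then lift this to the product via Theorem \ref{infprod}, then descend to pseudocompactness via the classical feeble compactness equivalence.

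For the first step, I would invoke the results from \cite{AMPRT} (in particular \cite[Proposition 1.10]{AMPRT}, already used in the discussion immediately following Corollary \ref{p}), which establish that a Tychonoff pseudocompact space which is either scattered, first countable, $\phi$-$\omega$-scattered, or admits the structure of a topological group is sequentially pseudocompact. Applying this factor by factor yields that every $X_h$ is sequentially pseudocompact. Note that different factors are allowed to satisfy different properties from the list; this causes no trouble because the implication to sequential pseudocompactness is applied to each $X_h$ individually. Equivalently, one can take $\mathcal{P}$ to be the disjunction of the four listed properties and quote Corollary \ref{p} directly.

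For the second step, Theorem \ref{infprod} immediately gives that $X=\prod_{h \in H}X_h$ is sequentially pseudocompact, which establishes the parenthetical strengthening. To deduce pseudocompactness of $X$, observe that sequential pseudocompactness trivially implies feeble compactness (the defining sequence with $J$ infinite certainly witnesses the weaker feeble compactness condition for every neighborhood of $x$). Since $X$ is a product of Tychonoff spaces, $X$ itself is Tychonoff, and for Tychonoff spaces feeble compactness coincides with pseudocompactness, as recorded in Remark \ref{rmk}. Therefore $X$ is pseudocompact.

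There is no real obstacle: the corollary is formally an immediate consequence of Theorem \ref{infprod} once the per-factor implications of \cite{AMPRT} are in hand. The only minor subtlety, namely the non-uniformity of which property each $X_h$ enjoys, is dispatched by applying the per-factor implication pointwise before invoking the product theorem, so that no single property $\mathcal{P}$ needs to be extracted for the family as a whole.
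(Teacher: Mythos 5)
Your proposal is correct and follows exactly the paper's route: each factor is sequentially pseudocompact by the cited results from \cite{AMPRT}, the product is then sequentially pseudocompact by Theorem \ref{infprod}, and pseudocompactness follows since the (Tychonoff) product is feebly compact. The extra care you take about different factors satisfying different properties, and about the descent from sequential pseudocompactness to pseudocompactness, merely makes explicit what the paper leaves implicit.
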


 \begin{proof} 
By the mentioned results from \cite{AMPRT}, each $X_h$
is sequentially pseudocompact, hence $X$ is sequentially pseudocompact by Theorem \ref{infprod}.   
\end{proof}

\begin{problems} \labbel{prob}
 \begin{enumerate}   
 \item 
Find other properties $\mathcal P$ satisfying the assumption in 
Corollary \ref{p}, besides those described in \cite{AMPRT}.
\item
Is there some significant part of the (topological)
theory of pseudocompact topological groups
 which follows already from the assumption of sequential pseudocompactness?
More precisely, are there other theorems holding for
pseudocompact topological groups which can be generalized
to sequentially pseudocompact topological spaces (with no
algebraic structure on them)?
  \end{enumerate}
 \end{problems}

The proof of Theorem \ref{infprod} actually shows a little more. 
If $(X_h) _{h \in H} $ is a family of topological spaces, 
the \emph{$ \omega$-box topology} on   
 $ \prod _{h \in H} X_h$ is defined as the topology a base of which 
consists of the set of the form
$   \prod _{h \in H} O_h$,
where each $O_h$ is an open set of $X_h$, 
and $|\{ h \in H \mid O_h \not = X_h\}| \leq \omega $.

\begin{proposition} \labbel{box}
Suppose that
$ (X_h)_{h \in H}$
  is a family of 
sequentially pseudocompact 
topological spaces.
If
$( O_n) _{n \in \omega } $ 
is a sequence of  nonempty
open sets in the 
$ \omega$-box topology on $ \prod _{h \in H} X_h$,
then 
there are an infinite set $J \subseteq \omega $
and a point $x \in  \prod _{h \in H} X_h$ such that
$\{ n \in J \mid U \cap O_n = \emptyset \}$ is finite,
for every
neighborhood $U$ of $x$ in the Tychonoff product topology 
on $\prod _{h \in H} X_h$.
\end{proposition}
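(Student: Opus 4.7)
The plan is to run the same reduction as in part~(A) of the proof of Theorem~\ref{infprod}, exploiting the key observation that a countable family of basic $\omega$-box open sets, each restricting at most countably many coordinates, still has only countably many coordinates restricted in total.

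Concretely, I would first shrink each $O_n$ to a nonempty basic $\omega$-box open subset $O'_n = \prod_{h \in H} O'_{n,h} \subseteq O_n$ with each $O'_{n,h}$ a nonempty open subset of $X_h$, and form
\[
K = \{ h \in H \mid O'_{n,h} \neq X_h \text{ for some } n \in \omega \}.
\]
Since each $O'_n$ is a basic $\omega$-box open set, the set $\{ h \in H \mid O'_{n,h} \neq X_h\}$ is countable for each $n$, and $K$ is then a countable union of countable sets, so $|K| \leq \omega$. By Theorem~\ref{infprod}, the subproduct $\prod_{h \in K} X_h$ is sequentially pseudocompact. Applying Proposition~\ref{equiv}(2) to the sequence $\bigl(\prod_{h \in K} O'_{n,h}\bigr)_{n \in \omega}$ of nonempty open sets in $\prod_{h \in K} X_h$ would then yield an infinite $J \subseteq \omega$ together with a point $(x_h)_{h \in K}$ such that every neighborhood of $(x_h)_{h \in K}$ meets all but finitely many of these sets (for $n \in J$). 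Extend $(x_h)_{h \in K}$ arbitrarily to $(x_h)_{h \in H} \in \prod_{h \in H} X_h$.

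To finish, given any Tychonoff neighborhood $U$ of $(x_h)_{h \in H}$, take a basic subneighborhood $\prod_{h \in H} U_h \subseteq U$ with only finitely many $U_h \neq X_h$; its projection $V = \prod_{h \in K} U_h$ is a neighborhood of $(x_h)_{h \in K}$ in $\prod_{h \in K} X_h$. Whenever $V$ meets $\prod_{h \in K} O'_{n,h}$, the coordinates $h \in H \setminus K$ satisfy $O'_{n,h} = X_h$, so $U_h \cap O'_{n,h} = U_h \neq \emptyset$ automatically, and hence $U \cap O_n \supseteq \prod_{h \in H}(U_h \cap O'_{n,h}) \neq \emptyset$. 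By the choice of $V$, this fails for only finitely many $n \in J$, which is the desired conclusion. There is no serious obstacle: the argument is a direct transcription of part~(A) of the proof of Theorem~\ref{infprod}, with the phrase \emph{``finite support''} consistently replaced by \emph{``at most countable support''}, which is exactly what is needed to keep $K$ countable.
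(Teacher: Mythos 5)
There is a genuine gap in the middle step. After you restrict to the countable set $K$, the sets $\prod_{h \in K} O'_{n,h}$ are in general \emph{not} open in $\prod_{h \in K} X_h$ with the Tychonoff product topology: a single basic $\omega$-box open set may properly restrict countably infinitely many coordinates, all of which lie in $K$, and a product with infinitely many proper factors contains no basic product-open set. But the sequential pseudocompactness of $\prod_{h \in K} X_h$ supplied by Theorem~\ref{infprod} (and the reformulation in Proposition~\ref{equiv}(2)) applies only to sequences of sets that are open in the product topology. So your invocation ``by Theorem~\ref{infprod} the subproduct is sequentially pseudocompact, now apply Proposition~\ref{equiv}(2) to $\bigl(\prod_{h \in K} O'_{n,h}\bigr)_{n\in\omega}$'' is unjustified; what you would actually need there is the statement of Proposition~\ref{box} itself for the countable index set $K$, which is circular. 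This is precisely the difference from part~(A) of the proof of Theorem~\ref{infprod}: there each $O'_n$ has \emph{finite} support, so its trace on $\prod_{h \in K} X_h$ stays product-open, whereas in the $\omega$-box setting it does not. Your reduction to countable $K$ and your final verification with a finite-support Tychonoff neighborhood are both fine; the missing ingredient is how to handle a countable product against $\omega$-box open sets.

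The paper closes exactly this hole by not citing Theorem~\ref{infprod} as a black box for the countable subproduct, but by rerunning the \emph{second} half of its proof: enumerate $K$ as $\omega$, apply sequential pseudocompactness of each single factor $X_i$ (via Proposition~\ref{equiv}(2)) to the coordinate sequence $(O'_{n,i})_{n}$, producing points $x_i$ and a decreasing chain of infinite sets $J_0 \supseteq J_1 \supseteq \cdots$, and then diagonalize to obtain $J$. That argument only uses that each $O'_{n,i}$ is a nonempty open subset of $X_i$ and that the final neighborhood $U$ has finite support (being a Tychonoff neighborhood); it never uses that, for fixed $n$, $O'_{n,i} = X_i$ for all but finitely many $i$. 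Replacing your appeal to Theorem~\ref{infprod} plus Proposition~\ref{equiv}(2) by this coordinatewise diagonal argument repairs the proof.
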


\begin{proof} 
Same as the   proof of Theorem \ref{infprod}.
Indeed, the set
$K$ in the proof of (A) is countable anyway, this time being the countable union 
of a family of countable sets.
Thus the proposition holds if and only if it holds
for every countable $H$. Now we can argue 
as in  the second part of the proof of Theorem \ref{infprod}.
Notice that there  we have only used 
the assumption that, for each $n$, 
$O'_n$ has the form 
$\prod _{i \in \omega } O' _{n,i} $, 
for open sets
$O' _{n,i} $  of $X_i$,
with no need of assuming that (for fixed $n$)
$O' _{n,i} =X_i$, for all but finitely many
$i \in \omega $. 
\end{proof} 

Of course, in the statement of  Proposition \ref{box}, 
the neighborhoods $U$ of $x$ have to be considered in the Tychonoff 
product topology. The statement would turn out to be false 
allowing $U$ vary among the neighborhoods of $x$ in the $ \omega$-box
topology. Indeed, for example,  
a discrete two-element space is vacuously  sequentially pseudocompact;
however, its $ \omega ^\text{th} $ power
in the $ \omega$-box
topology
is a discrete space
of cardinality $ \mathfrak c$, hence the conclusion
of Proposition \ref{box} would fail.

We shall present now a  version of Theorem \ref{infprod}
in which the assumption that all factors are sequentially pseudocompact 
can be relaxed (with a correspondingly weaker conclusion, of course).

\begin{prop} \labbel{prod1} 
  \begin{enumerate}    
\item    
The product of a Tychonoff sequentially pseudocompact
topological space  with a Tychonoff pseudocompact space is pseudocompact.
\item
More generally, the product of a sequentially pseudocompact
topological space  with a feebly compact 
space is feebly compact.
\item
If all factors of some product
are sequentially pseudocompact, except possibly for one factor
which is feebly compact, then the product is
feebly compact.
\end{enumerate} 
\end{prop}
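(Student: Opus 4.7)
The plan is to prove (2) first (since (1) will follow from it via Remark \ref{rmk}, and (3) from it via Theorem \ref{infprod}). So let $X$ be sequentially pseudocompact, $Y$ feebly compact, and let $(O_n)_{n \in \omega}$ be a sequence of nonempty open sets in $X \times Y$. The first step is the standard reduction: pick for each $n$ a nonempty basic open subset $V_n \times W_n \subseteq O_n$ with $V_n$ open in $X$ and $W_n$ open in $Y$. It clearly suffices to find $(x,y) \in X \times Y$ such that every neighborhood of $(x,y)$ meets infinitely many rectangles $V_n \times W_n$.

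Next, I apply Proposition \ref{equiv}(2) to the sequence $(V_n)_{n \in \omega}$ of nonempty open sets in $X$, which is allowed since $X$ is sequentially pseudocompact. This yields an infinite $J \subseteq \omega$ and $x \in X$ such that, for every neighborhood $U$ of $x$, the set $\{n \in J \mid U \cap V_n = \emptyset\}$ is finite. Now apply feeble compactness of $Y$ to the subsequence $(W_n)_{n \in J}$ (which consists of nonempty open sets of $Y$), obtaining $y \in Y$ such that, for every neighborhood $U'$ of $y$, the set $\{n \in J \mid U' \cap W_n \neq \emptyset\}$ is infinite. I claim $(x,y)$ works. Any basic neighborhood of $(x,y)$ has the form $U \times U'$ with $U$ a neighborhood of $x$ and $U'$ a neighborhood of $y$; and $(U \times U') \cap (V_n \times W_n) \neq \emptyset$ iff both $U \cap V_n \neq \emptyset$ and $U' \cap W_n \neq \emptyset$. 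The first condition holds for all but finitely many $n \in J$, while the second holds for infinitely many $n \in J$; hence both hold simultaneously for infinitely many $n$, as required.

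For (1), assume both factors are Tychonoff. By Remark \ref{rmk}, a Tychonoff pseudocompact space is feebly compact, so (2) gives that the product is feebly compact. A product of Tychonoff spaces is Tychonoff, and on Tychonoff spaces feeble compactness coincides with pseudocompactness (again by Remark \ref{rmk}), so the product is pseudocompact. For (3), let $X = \prod_{h \in H} X_h$ with some distinguished $h_0 \in H$ such that $X_{h_0}$ is feebly compact and all other $X_h$ are sequentially pseudocompact. Set $X' = \prod_{h \in H \setminus \{h_0\}} X_h$; by Theorem \ref{infprod}, $X'$ is sequentially pseudocompact. Since $X$ is canonically homeomorphic to $X' \times X_{h_0}$, part (2) gives that $X$ is feebly compact.

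There is no real obstacle: the only delicate point is the matching of the two asymmetric conditions (``cofinitely many $n \in J$ are good for $X$'' versus ``infinitely many $n \in J$ are good for $Y$''), and it is precisely to obtain this asymmetry that we needed the stronger formulation of Proposition \ref{equiv}(2) rather than plain feeble compactness on the $X$ side.
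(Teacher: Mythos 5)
Your proposal is correct and follows essentially the same route as the paper: reduce to open rectangles inside the $O_n$, use Proposition \ref{equiv}(2) on the sequentially pseudocompact factor to get cofinitely-many-good indices in an infinite $J$, apply feeble compactness of the other factor to the subsequence indexed by $J$, and match the two conditions; (1) and (3) are then deduced exactly as in the paper (Remark \ref{rmk} plus Tychonoffness of the product, and grouping the sequentially pseudocompact factors via Theorem \ref{infprod}).
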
 

\begin{proof}
We shall prove (2), which is more general than (1),
by the results recalled in Remark \ref{rmk},
and since the product of two Tychonoff spaces is still Tychonoff.

Suppose that $X$ is sequentially pseudocompact, and $Y$ is feebly compact, 
and let 
$( O_n) _{n \in \omega } $ 
be a sequence of nonempty
open sets of $X \times Y$.
Thus, there are nonempty opens sets $( A_n) _{n \in \omega } $ 
in $X$ and nonempty open sets 
$( B_n) _{n \in \omega } $ in $Y$ such that 
$O_n \supseteq A_n \times B_n$, for every $n \in  \omega$. 

Since $X$ is sequentially pseudocompact, then, by Proposition \ref{equiv}, 
there are an infinite set $J \subseteq \omega $
and a point $x \in X$ such that every
neighborhood of $x$ intersects all but finitely many elements of 
$( A_n) _{n \in J} $.
By applying the feeble compactness of $Y$ to the sequence
$( B_n) _{n \in J} $, we get $y \in Y$ such that every neighborhood 
of $y$ intersects infinitely many elements of 
$( B_n) _{n \in J} $.
The above conditions imply that every neighborhood of $(x,y)$
in $X \times Y$ intersects infinitely many elements
of
$( A_n \times B_n) _{n \in J} $,
hence infinitely many elements from the original sequence
$( O_n) _{n \in \omega } $. Feeble compactness of $X \times Y$
is thus proved.

(3) is immediate from (2) and Theorem \ref{infprod} 
(grouping together the sequentially pseudocompact factors).
 \end{proof}  

Condition (1) in Proposition \ref{prod1}
is also  a consequence of
\cite[Proposition 1.9]{AMPRT}.
The same arguments on
\cite[p. 7]{AMPRT},
together with \cite[Theorem 1.2]{V}
(which uses no separation axiom)
furnish another proof of (2).
Condition (3) is apparently  
new.
The particular case of Proposition \ref{prod1} in which the assumption of
sequential pseudocompactness is strengthened to sequential compactness  
is well known \cite{S}.

The following notions might deserve some study, in particular when $\alpha$
is a cardinal.

\begin{definition} \labbel{lambda} 
For $ \alpha $ an infinite limit ordinal, we say that a topological space $X$ is 
\emph{sequentially $ \alpha $-pseudocompact} if and only if, for any sequence 
$( O_ \beta ) _{ \beta   \in \alpha   } $ 
 of  nonempty
open sets of $X$, there 
are 
some $x \in X$ and
a subset $Z$ of $\alpha$ such that 
$Z$ has  order type $\alpha$, and,
for every neighborhood $U$ of $x$, there is 
$\beta < \alpha $ such that  
 $ U \cap O_ \gamma  \not = \emptyset $,
for every $\gamma \in Z$
such that $\gamma> \beta $.  

If we modify the above definition by further requesting that the
$( O_ \beta )$'s are pairwise disjoint, we say that 
 $X$ is 
\emph{d-\brfrt sequentially $ \alpha $-\brfrt pseudocompact}.
Clearly, for every $\alpha$, sequential $ \alpha $-pseudocompactness
implies  d-\brfrt sequential $ \alpha $-pseudocompactness, and 
for $\alpha= \omega $, 
both notions are equivalent (and equivalent to 
sequential pseudocompactness),
 by Proposition \ref{equiv}.
\end{definition}

\end{document}